\title[dynamical sampling for source term recovery]{Recovery of rapidly decaying source terms from  dynamical samples in evolution equations }
\date{\vspace{-5ex}}
\newtheorem{theorem}{Theorem}[section]
\newtheorem{proposition}[theorem]{Proposition}
\newtheorem{lemma}[theorem]{Lemma}
\newtheorem{definition}[theorem]{Definition}
\newtheorem{remark}[theorem]{Remark}
\newtheorem{assume}{Assumption}
\newcommand{\HH}{\mathcal{H}}
\newcommand{\R}{\mathbb{R}}
\newcommand{\N}{\mathbb{N}}
\newcommand{\estime}{\mathfrak t}
\definecolor{aacolor}{rgb}{0.05, 0.75, 1}
\definecolor{ikcolor}{rgb}{1, 0., 0.}
\begin{document}
	\date{}

\author{Akram Aldroubi, Le Gong
		and Ilya Krishtal}
		
\address{Akram Aldroubi, Department of Mathematics, University of Vanderbilt}
\email{akram.aldroubi@vanderbilt.edu}

\address{Le Gong, Department of Mathematics, University of Vanderbilt}
\email{le.gong@vanderbilt.edu}

\address{Ilya Krishtal, Department of Mathematical Sciences, Northern Illinois University}
\email{ikrishtal@niu.edu}
%
%

\keywords{Sampling Theory, Forcing, Frames,
Reconstruction, Semigroups, Continuous Sampling }
\subjclass [2010] {46N99, 42C15,  94O20}

\maketitle
\begin{abstract}
We analyze the problem of recovering  a source term of the form $h(t)=\sum_{j}h_j\phi(t-t_j)\chi_{[t_j, \infty)}(t)$  
from space-time samples of the solution $u$ of an  initial value problem in a Hilbert space of functions. In the expression of $h$, the terms $h_j$ belong to the Hilbert space, while $\phi$ is a generic real-valued function with exponential decay at $\infty$. The design of the sampling strategy takes into account noise in measurements and the existence of a background source. 
\end{abstract}


%

\section{Introduction}

Dynamical sampling refers to a set of problems in which a space-time signal $u$ evolving in time under the action of a linear operator as in \eqref {DFM} below is to be sampled on a space-time set $S=\{(x,t): x \in X, t\in \mathcal T\}$ in order to recover $u_0$, $u$, $F$ or other information  related to these functions. For example, when the goal is to recover $u_0$, 
we get the so called space-time trade-off problems  (see e.g., \cite{APT15, ADK13, ADK15,  AGHJKR21, CMPP20, CJS15,  DMM21, UZ21, ZLL17}). If the goal is to recover the unknown underlying operator $A$, or some of its spectral characteristics, we get the system identification problem in dynamical sampling \cite{AHKLLV18, AK16,  Tan17}. In other situations, the goal is to identify the driving source term from space-time samples \cite{AHKK21,  AK15}.  In all dynamical sampling problems, frame theory plays a fundamental albeit, at times, hidden role (see e.g. \cite{ACCP21, AHKLLV18, AHP19, CMPP20}).  Moreover, this important connection has also been used to develop frame theory and led to the  concept of dynamical frames (see e.g.  \cite{AR18, AKh17, BI221,  CH18, CHP20, Cor21, KS19}).  In this paper, we consider the problem of designing space-time sampling patterns that permit  recovery of the source term 
of an initial value problem (IVP) 
or some relevant portion thereof. 

 \subsection{Motivation} In \cite{AHKK21}, the authors introduced a new sampling technique which prescribes how one may sample the solution of an
 IVP to detect ``bursts'' in the driving force of the system. The proposed special structure of the samplers allowed one to ``predict'' the value of the solution at the next sampling instance provided that no burst occurred during the sampling period. Thus, if the samples at the end of the period were significantly different from the prediction, a ``burst'' must have occurred. In \cite{AHKK21}, the ``bursts'' were modeled as a linear combination of Dirac measures. 
In this paper, we employ a modification of the same technique to detect 
localized non-instantaneous sources which decay exponentially in time after activation \cite{MBD15}. 
Such sources may describe, for example, an irregular intake of rapidly degrading substances. In particular, the IVP we consider may model a complex chemical reaction contaminated by such an intake, and our goal, in this case, would be to determine when and what substances were added to the system. Many other phenomena driven by natural mechanisms, such as the dispersion of pollution, the spreading of fungal diseases and the leakage of biochemical waste, can also be described by IVP with the source terms considered here (see \cite{MBD15, MBD17, RCLV11} and references therein). 
Thus, a robust sampling and reconstruction algorithm for such IVP would be beneficial for studying these real-world applications. 


 \subsection{Problem setting}
 Let us give a more precise description of our setting.
We consider the following abstract initial value problem:
\begin{equation}\label{DFM}
	\begin{cases}
	\dot{u}(t)=Au(t)+F(t)\\
	u(0)=u_0,
	\end{cases}
	\quad t\in\mathbb R_+,\ u_0\in\HH.
\end{equation} 
Above, the variable $x \in \R^d$  is the ``spatial''  variable and $t\in\R_+$ represents time.
For each fixed $t\in\mathbb R_+$, 
$u(t)$ is a vector in some Hilbert space $\HH$ of functions on a subset of $\R^d$. 
Additionally, $\dot{u}: \R_+\to\HH$ is the time derivative of $u$, $F:\mathbb{R}_+\rightarrow\HH$ is  a forcing term a portion of which we wish to recover, and $A: D(A)\subseteq \HH\to\HH$ is a generator of a strongly continuous semigroup $T:\mathbb R_+\to B(\HH)$.

As in \cite{AHKK21}, we consider force terms of the form:
\[
F=h+\eta
\]
where $\eta$ is a Lipschitz continuous background source term. Unlike \cite{AHKK21}, however,  the burst-like forcing term $h$ is assumed to be given by 
\begin{equation}\label{Force}
h(t)=\sum_{j=1}^Nh_j\phi(t-t_j)\chi_{[t_j, \infty)}(t),
\end{equation}
where $0<t_1 < \ldots < t_N$, $h_j\in\HH$, and   $\phi$ is a non-negative function with a certain prescribed decay on $[0,\infty).$ We regard $t_j$ and $h_j$ as the time and the shape of the  $j$-th burst, respectively.

The goal of this paper is to provide an algorithm similar to one in \cite{AHKK21} that recovers the ``burst-like'' portion $h$ of $F$ from space-time samples  
of the solution $u$ of \eqref{DFM}. Once again we shall choose the structure of the samplers that would allow one to detect an occurrence of a burst in a sampling period by comparing the predicted values of the samples with the actual samples. In fact, we will show that the same structure of samplers that was used in the first of the two approaches in \cite{AHKK21} may also be used in the current situation. The recovery algorithms in this paper, however, are significantly different and are not just a straightforward tweak of the ones in \cite{AHKK21}. The main difficulty in the current setting is the need to account for the influence of past bursts which was not an issue when those were Diracs.


\subsection{Paper organization}
The rest of the paper is organised as follows. In Section \ref{sec2}, we remind the reader of some basic properties of one-parameter operator semigroups and their use in solving IVP such as \eqref{DFM}. We also list all model assumptions for the algorithms of this paper. 

In Section \ref{sec3}, we present the main results. The section is divided into two parts. In Subsection \ref{model1}, the decay function $\phi$ in \eqref{Force} is assumed to be of the form $e^{-\rho t}$ for some $\rho>0.$ We present the structure of  measurement functions (\ref{measurements}) for this case and utilize discrete samples of the measurement functions to approximate the burst time and shape in the presence of background source and measurement acquisition errors. In Subsection \ref{model2}, we consider a more general model, where the decay function $\phi$ does not have a concrete formula, but is rather bounded above by a decaying exponential function. Under additional assumptions that the shapes of the bursts are uniformly bounded and that the differences $t_{j+1}-t_j$, $j=1,...,N-1$, are large enough, we present a modification of the algorithm from Subsection \ref{model1}, which solves the same problem in this more general setting.
Finally, in Section \ref{sec4}, we set up specific (synthetic) dynamical systems to test the algorithms and describe the results of the testing.

\section{IVP solution and model assumptions}\label{sec2}

In this section, we recall a few basic facts from the theory of one-parameter operator semigroups and summarize our model assumptions. 

\subsection{IVP toolkit}

\begin{definition}
A strongly continuous operator semigroup  is a map $T:\mathbb{R}_+\rightarrow B(\HH)$ (where $B(\HH)$ is the space of all bounded linear operators on $\HH$), which satisfies
\begin {enumerate}[ (i)]
\item $T(0)=I$, 
\item $T(t+s)=T(t)T(s)$ for all $t,s\geq 0$, and
\item $\|T(t)h-h\|\rightarrow 0$ as $t\rightarrow 0$ for all $h\in \HH$.
\end {enumerate}
\end{definition}

\begin{proposition}\label{pp1}\cite{EN00}
There exist constants $a\in\R$ and $M\ge 1$ such that
\[||T(t)||\le Me^{at}\] for all $t\ge 0.$
\end{proposition}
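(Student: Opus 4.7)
The plan is to establish the bound in two stages: first pin down uniform boundedness of $\|T(t)\|$ on a short interval $[0,\delta]$, then propagate this bound to all $t \ge 0$ using the semigroup law. For the first stage, I would argue by contradiction combined with the uniform boundedness principle. If $\|T(s)\|$ were unbounded on every neighborhood of $0$, one could extract a sequence $s_n \to 0^+$ with $\|T(s_n)\| \to \infty$. Property (iii) guarantees $T(s_n)h \to h$ in $\HH$ for every fixed $h \in \HH$, so the family $\{T(s_n)h\}_n$ is bounded for each $h$; the Banach--Steinhaus theorem then forces $\sup_n \|T(s_n)\| < \infty$, a contradiction. Hence there exists $\delta > 0$ with $M := \sup_{s \in [0,\delta]} \|T(s)\| < \infty$, and property (i) gives $M \ge \|T(0)\| = 1$.

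For the second stage, I would use Euclidean division $t = n\delta + r$ with $n \in \N \cup \{0\}$ and $r \in [0,\delta)$. The semigroup property (ii) yields $T(t) = T(\delta)^n T(r)$, so submultiplicativity of the operator norm gives $\|T(t)\| \le M^{n+1}$. Since $n \le t/\delta$, one obtains $\|T(t)\| \le M \cdot M^{t/\delta} = M e^{at}$ with $a := (\ln M)/\delta \in \R$, as desired.

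The main obstacle is the first stage: strong continuity at $0$ directly provides only pointwise bounds $\|T(s)h\| \le C_h$ near $s=0$, and bridging from pointwise to uniform control over the operator norm requires the Banach--Steinhaus argument above. Once that step is in hand, the rest reduces to a short calculation with the semigroup identity and a conversion of geometric growth $M^{n+1}$ into exponential growth $Me^{at}$.
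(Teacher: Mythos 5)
Your argument is correct and complete: the paper gives no proof of this proposition, deferring entirely to the cited reference, and your two-stage argument (Banach--Steinhaus to upgrade the pointwise bounds from strong continuity to uniform boundedness of $\|T(s)\|$ on some $[0,\delta]$, then the semigroup law with Euclidean division $t=n\delta+r$ to convert the geometric bound $M^{n+1}$ into $Me^{at}$ with $a=(\ln M)/\delta$) is precisely the standard proof found in that reference. No gaps.
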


Recall \cite[p.~436]{EN00} that the (mild) solution of \eqref{DFM} can be represented  as
\begin{equation}\label{SfDFM}
	u(t)=T(t)u_0+\int_{0}^{t}T(t-s)F(s)ds.
\end{equation}
Substituting $F=h+\eta$ with $h$ of the form \eqref{Force} yields
\begin{equation}\label{solburst}
	u(t)=T(t)u_0+\sum\limits_{t_j<t}\int_{t_j}^{t}T(t-s)h_j\phi(s-t_j)ds+\int_0^{t}T(t-s)\eta(s)ds, \quad
	t\ge 0.
\end{equation}

In this paper, we will use the measurement function $\mathfrak m:\R_+\times \HH\to \R$ given by  
 \begin{equation}\label{measurements}
	\mathfrak m(t,g) = \left\langle u(t),g \right\rangle +\nu(t,g),\ t\ge 0,\ g\in G,
\end{equation}
where $\left\langle \cdot,\cdot \right\rangle$ is the inner product in $\HH$, $\nu$ is the measurement acquisition noise, and $G$ is the collection of samplers whose structure we wish to prescribe. 

\subsection{Model assumptions}

\begin{assume}\label{as2}
The set of samplers $G$ has the form $G = \widetilde G \cup T^*(\beta) \widetilde G$ for some countable (possibly, finite) set $\widetilde G\subseteq \HH$. Additionally, in the model of Subsection \ref{model2}, the set $\widetilde G$ is assumed to be uniformly bounded by some $R \in \R$, i.e. $R = \sup_{g\in\widetilde G} \|g\| < \infty$.
\end{assume}

\begin{assume}\label{as3}
In Subsection \ref{model2}, the burst terms are uniformly bounded, i.e. $\|h_j\|\le H$, $j=1, \ldots, N$, for some $H\in\R$.
\end{assume}

\begin{assume}\label{as4}
The background source $\eta: \R_+\to\HH$ is uniformly bounded by a constant $K>0$ and Lipschitz with a Lipschitz constant $L\ge 0$, i.e.   $\sup\limits_{t\ge0}\|\eta(t)\|\le K$ and $\|\eta(t+s)-\eta(t)\|\le Ls$, $t,s\in \R_+$.
\end{assume}

\begin{assume}\label{as5}
The additive noise  $\nu$ in the measurements 
\eqref {measurements}  satisfies $$\sup\limits_{t > 0,\ h\in\HH}|\nu(t,h)|\le \sigma.$$

\end{assume}

\begin{assume}${}$\\ \label{as6}
\begin{enumerate} 
    \item In Section \ref {model1} we assume that the distance between two bursts is bounded below: $t_{j+1}-t_j\ge 4\beta$. 
    \item In Section \ref {model2}  we assume $t_{j+1}-t_j\ge D+4\beta$  with some positive number $D$.  
\end{enumerate}
\end{assume}

\section{Main Results}\label{sec3}
 
\subsection{Model with specific decay function}\label{model1}
In this section, we consider the special case where the decay function $\phi$ is given by $\phi(t)=e^{-\rho t}$ with some $\rho>0$:
\begin{equation}
\begin{cases}
\dot{u}(t)=Au(t)+\sum\limits_j^Nh_je^{-\rho(t-t_j)}\chi_{[t_j, \infty)}(t)+\eta(t)\\
u(0)=u_0.\\
\end{cases}
\end{equation}
Since the operator $A$ generates a strongly continuous semigroup $T$, by Proposition \ref{pp1}, we can find real numbers $M$ and $a$ satisfying $||T(t)||\le Me^{at}$ for all $t\ge 0$. We will use these numbers to estimate the accuracy of our recovery algorithm.

We acquire the following set of measurements:
\begin{equation}\label{meas}
    \begin{aligned}
    \mathfrak m_n(\frac{g}{\beta})=&\langle u(n\beta), \frac{g}{\beta}\rangle+\nu(n\beta, \frac{g}{\beta}),\\
    \mathfrak m_n(\frac{T^*(\beta)g}{\beta})=&\langle u(n\beta), \frac{T^*(\beta)g}{\beta}\rangle+\nu(n\beta, \frac{T^*(\beta)g}{\beta}),\quad g\in\widetilde G, n\in \N,
    \end{aligned}
\end{equation}
where $\beta$ is the time sampling step, and $\nu$ represents an additive noise (see Assumption \ref{as5}).
%
The first of the pair of measurements in \eqref{meas} serves to assess the current state of the system whereas the second one will be used as a predictor of the measurement at $t=(n+1)\beta$.

To explain our idea of the recovery algorithm, we first present what happens in the ideal case when the measurements are noiseless ($\nu\equiv 0$) and there is no background source ($\eta \equiv 0$). For the convenience of exposition, we define $f_n\in\HH$ and $\tau_n\in [n\beta, (n+1)\beta)$ for each $n$ as follows
\begin{equation}\label{f_n}
\left\{
\begin{array}{lll}
f_n=h_j, &\tau_n=t_j, & \mbox{ if the } j\mbox{-th burst occurred in } [n\beta, (n+1)\beta);\\
f_n=0, &\tau_n=n\beta, & \mbox{ if no burst occurred in } [n\beta, (n+1)\beta).
\end{array}
\right.
\end{equation}
There is no ambiguity in the above definition due to Assumption \ref{as6}.

To reveal the predictive nature of the second measurement in \eqref{meas}, we first consider the difference
\begin{equation}\label{Fn}
F_n=\mathfrak m_{n+1}(\frac{g}{\beta})-\mathfrak m_n(\frac{T^*(\beta)g}{\beta}).
\end{equation}
In the ideal case, utilizing \eqref{solburst} we get
\[
\begin{array}{ll}
F_n&=\mathfrak m_{n+1}(\frac{g}{\beta})-\mathfrak m_n(\frac{T^*(\beta)g}{\beta})\\
&=\langle T((n+1)\beta)u_0,\frac{g}{\beta}\rangle+\sum\limits_{\tau_i<(n+1)\beta}\int_{\tau_i}^{(n+1)\beta}\langle T((n+1)\beta-s)f_i e^{\rho(\tau_i-s)}, \frac{g}{\beta}\rangle ds
\\
&\quad-\langle T(n\beta)u_0,\frac{T^*(\beta)g}{\beta}\rangle-
\sum\limits_{\tau_i<n\beta}\int_{\tau_i}^{n\beta}\langle T(n\beta-s)f_i e^{\rho(\tau_i-s)}, \frac{T^*(\beta)g}{\beta}\rangle ds \\ 

&=\int_{\tau_n}^{(n+1)\beta}\langle T((n+1)\beta-s)f_n e^{d(\tau_n-s)}, \frac{g}{\beta}\rangle ds
\\
&\quad+
\sum\limits_{\tau_i<n\beta}\int_{\tau_i}^{(n+1)\beta} \langle T((n+1)\beta-s)f_i e^{\rho(\tau_i-s)}, \frac{g}{\beta}\rangle ds
\\
&\quad-
\sum\limits_{\tau_i<n\beta}\int_{\tau_i}^{n\beta}\langle T((n+1)\beta-s)f_i e^{\rho(\tau_i-s)}, \frac{g}{\beta}\rangle ds 
\\

&=\int_{\tau_n}^{(n+1)\beta}\langle T((n+1)\beta-s)f_n e^{\rho(\tau_n-s)}, \frac{g}{\beta}\rangle ds
\\
&\quad+
\sum\limits_{\tau_i<n\beta}\int_{n\beta}^{(n+1)\beta} \langle T((n+1)\beta-s)f_i e^{\rho(\tau_i-s)}, \frac{g}{\beta}\rangle ds.
\\

&=\int_{\tau_n}^{(n+1)\beta}\langle T((n+1)\beta-s)f_n e^{\rho(\tau_n-s)}, \frac{g}{\beta}\rangle ds
\\
&\quad+
\sum\limits_{\tau_i<n\beta}e^{-\rho n\beta}\int_{0}^{\beta} \langle T(\beta-s)f_i e^{\rho(\tau_i-s)}, \frac{g}{\beta}\rangle ds.
\end{array}
\]
\begin{remark}
In the expression for $F_n$ above, if no burst occurred in $[n\beta, (n+1)\beta)$ (i.e. $f_n=0$), then $\int_{\tau_n}^{(n+1)\beta}\langle T((n+1)\beta-s)f_n e^{\rho(\tau_n-s)}, \frac{g}{\beta}\rangle ds=0.$ In addition, the term $\sum\limits_{\tau_i<n\beta}e^{-\rho n\beta}\int_0^{\beta}\langle T(\beta-s)f_i e^{\rho(\tau_i-s)}, \frac{g}{\beta}\rangle ds$ represents the effect of the bursts that had occurred before $n\beta$. 
\end{remark}

Secondly, we calculate the difference  $\Delta_n=e^{\rho\beta}F_{n+1}-F_n$, 
which involves the measurements in two consecutive intervals $[n\beta, (n+1)\beta)$ and $[(n+1)\beta, (n+2)\beta)$:
\[
\begin{array}{ll}
\Delta_n&=e^{\rho\beta}F_{n+1}-F_n\\
&=e^{\rho\beta}\int_{\tau_{n+1}}^{(n+2)\beta}\langle T((n+2)\beta-s)f_{n+1} e^{\rho(\tau_{n+1}-s)}, \frac{g}{\beta}\rangle ds+\sum\limits_{\tau_i<(n+1)\beta}e^{-\rho n\beta}\int_0^{\beta}\langle T(\beta-s)f_i e^{\rho(\tau_i-s)}, \frac{g}{\beta}\rangle ds\\

&\quad-\int_{\tau_n}^{(n+1)\beta}\langle T((n+1)\beta-s)f_n e^{\rho(\tau_n-s)}, \frac{g}{\beta}\rangle ds-\sum\limits_{\tau_i<n\beta}e^{-\rho n\beta}\int_0^{\beta}\langle T(\beta-s)f_i e^{\rho(\tau_i-s)}, \frac{g}{\beta}\rangle ds\\

&=e^{\rho\beta}\int_{\tau_{n+1}}^{(n+2)\beta}\langle T((n+2)\beta-s)f_{n+1} e^{\rho(\tau_{n+1}-s)}, \frac{g}{\beta}\rangle ds\\
&\quad-\int_{\tau_n}^{(n+1)\beta}\langle T((n+1)\beta-s)f_n e^{\rho(\tau_n-s)}, \frac{g}{\beta}\rangle ds+\sum\limits_{n\beta\le \tau_i<(n+1)\beta}e^{-\rho n\beta}\int_0^{\beta}\langle T(\beta-s)f_i e^{\rho(\tau_i-s)}, \frac{g}{\beta}\rangle ds\\

&=e^{\rho\beta}\int_{\tau_{n+1}}^{(n+2)\beta}\langle T((n+2)\beta-s)f_{n+1} e^{\rho(\tau_{n+1}-s)}, \frac{g}{\beta}\rangle ds\\
&\quad-\int_{\tau_n}^{(n+1)\beta}\langle T((n+1)\beta-s)f_n e^{\rho(\tau_n-s)}, \frac{g}{\beta}\rangle ds+e^{-\rho n\beta}\int_0^{\beta}\langle T(\beta-s)f_n e^{\rho(\tau_n-s)}, \frac{g}{\beta}\rangle ds\\

&=e^{\rho\beta}\int_{\tau_{n+1}}^{(n+2)\beta}\langle T((n+2)\beta-s)f_{n+1} e^{\rho(\tau_{n+1}-s)}, \frac{g}{\beta}\rangle ds\\
&\quad+\int_{n\beta}^{\tau_n}\langle T((n+1)\beta-s)f_n e^{\rho(\tau_n-s)}, \frac{g}{\beta}\rangle ds.
\end{array}
\]

The above calculation leads us to the key observation
\begin{equation}
    \label{keyobs}
    \Delta_n=0\quad\mbox{in the ideal case if no burst occurred in} [n\beta, (n+2)\beta).
\end{equation}
In case the $j$-th burst did occur in the interval $[(n+1)\beta, (n+2)\beta)$, we use the following calculation to estimate the inner products  $\langle h_j,g\rangle$. In view of Assumption \ref{as6}, we have $f_{n-1}=f_n=f_{n+2}=0$ and,
for $\beta$ sufficiently small, we get

\[
\begin{array}{ll}
&\quad e^{3\rho\beta}F_{n+2}-F_{n-1}\\
&=e^{3\rho\beta}\int_{\tau_{n+2}}^{(n+3)\beta}\langle T((n+3)\beta-s)f_{n+2} e^{\rho(\tau_{n+2}-s)}, \frac{g}{\beta}\rangle ds+\sum\limits_{\tau_i<(n+2)\beta}e^{-\rho (n-1)\beta}\int_0^{\beta}\langle T(\beta-s)f_i e^{\rho(\tau_i-s)}, \frac{g}{\beta}\rangle ds\\

&\quad-\int_{\tau_{n-1}}^{n\beta}\langle T(n\beta-s)f_{n-1} e^{\rho(\tau_{n-1}-s)}, \frac{g}{\beta}\rangle ds-\sum\limits_{\tau_i<(n-1)\beta}e^{-\rho (n-1)\beta}\int_0^{\beta}\langle T(\beta-s)f_i e^{\rho(\tau_i-s)}, \frac{g}{\beta}\rangle ds\\

&=\int_0^{\beta}\langle T(\beta-s)f_{n+1} e^{\rho(\tau_{n+1}-(n-1)\beta-s)}, \frac{g}{\beta}\rangle ds\\

&=\int_0^{\beta}\langle T(\beta-s)h_j e^{\rho(t_j-(n-1)\beta-s)}, \frac{g}{\beta}\rangle ds
\approx \langle h_j,g\rangle,
\end{array}
\]
where the last assertion is (essentially) yielded by the following lemma.

\begin{lemma}\label{lem_inpr}
Assume that $t_j\in [(n+1)\beta, (n+2)\beta)$ and 
\begin{equation}\label{v_1}
v_{k}(h_j,g,\beta)=\left|\int_0^{\beta}\langle T(\beta-s)h_je^{\rho(t_j-(n-k)\beta-s)}, \frac{g}{\beta}\rangle ds-\langle h_j, g\rangle\right|,\ k=0,1. 
\end{equation}
Then
\begin{equation}\label{estv1}
    v_k(h_j,g,\beta) \le \|g\|\left(M\|h_j\|(e^{(k+2)\rho\beta}-1)\mathbf{e}(a\beta)+\sup\limits_{s\in[0,\beta]}\|T(s)h_j-h_j\|\right),
\end{equation}
where $M$ and $a$ are as in Proposition \ref{pp1} and 
\begin{equation}\label{bfe}
\mathbf{e}(t)=
\left\{
\begin{array}{lll}
\frac{e^{t}-1}{t}, & t\neq 0;\\
1, & t=0.
\end{array}
\right.
\end{equation}
In particular, $v_k(h_j,g,\beta) \rightarrow 0$ as $\beta \rightarrow 0$.
\end{lemma}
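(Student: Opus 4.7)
The plan is to rewrite $\langle h_j,g\rangle$ as an integral against the uniform measure $ds/\beta$ on $[0,\beta]$, so that both terms in $v_k$ live under a single integral, and then separate the exponential factor from the semigroup deviation using an add-and-subtract trick.

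First I would write
\[
\langle h_j, g\rangle \;=\; \int_0^\beta \Big\langle h_j,\frac{g}{\beta}\Big\rangle\,ds,
\]
so that, setting $c(s):=e^{\rho(t_j-(n-k)\beta-s)}$, one has
\[
v_k(h_j,g,\beta)\;=\;\left|\int_0^\beta \Big\langle T(\beta-s)h_j\,c(s)-h_j,\,\tfrac{g}{\beta}\Big\rangle\,ds\right|.
\]
The Cauchy--Schwarz and triangle inequalities bring the norm inside, and the key decomposition
\[
T(\beta-s)h_j\,c(s)-h_j \;=\;\bigl(c(s)-1\bigr)\,T(\beta-s)h_j\;+\;\bigl(T(\beta-s)h_j-h_j\bigr)
\]
cleanly separates the two sources of error: the growth of the exponential $c(s)$ away from $1$, and the strong-continuity defect $T(\beta-s)h_j-h_j$.

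Next I would bound each of the two pieces. For the first piece, since $t_j\in[(n+1)\beta,(n+2)\beta)$ and $s\in[0,\beta]$, the exponent $\rho(t_j-(n-k)\beta-s)$ lies in the interval $[k\rho\beta,\,(k+2)\rho\beta)$, hence $1\le c(s) < e^{(k+2)\rho\beta}$ and $|c(s)-1|\le e^{(k+2)\rho\beta}-1$. Combined with Proposition~\ref{pp1}, which gives $\|T(\beta-s)h_j\|\le M e^{a(\beta-s)}\|h_j\|$, this contributes
\[
\frac{\|g\|}{\beta}\,M\|h_j\|\bigl(e^{(k+2)\rho\beta}-1\bigr)\int_0^\beta e^{a(\beta-s)}\,ds
\;=\;\|g\|\,M\|h_j\|\bigl(e^{(k+2)\rho\beta}-1\bigr)\,\mathbf{e}(a\beta),
\]
using the elementary identity $\tfrac{1}{\beta}\int_0^\beta e^{a(\beta-s)}ds=\mathbf{e}(a\beta)$ (valid in both cases of the piecewise definition \eqref{bfe}). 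For the second piece, the substitution $s\mapsto\beta-s$ in the integrand gives the bound $\|g\|\cdot\sup_{s\in[0,\beta]}\|T(s)h_j-h_j\|$. Adding the two contributions yields exactly \eqref{estv1}.

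Finally, the limit statement $v_k(h_j,g,\beta)\to 0$ as $\beta\to 0$ follows because $e^{(k+2)\rho\beta}-1\to 0$, $\mathbf{e}(a\beta)\to 1$, and strong continuity of $T$ forces $\sup_{s\in[0,\beta]}\|T(s)h_j-h_j\|\to 0$. There is no serious obstacle here; the only subtlety is bookkeeping — making sure that the interval in which $t_j-(n-k)\beta-s$ lives is identified correctly so that the factor $e^{(k+2)\rho\beta}-1$ (and not something worse) appears in the bound. Everything else is a standard application of semigroup estimates.
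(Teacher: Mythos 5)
Your argument is correct and follows essentially the same route as the paper: the same add-and-subtract decomposition into an exponential-deviation term and a strong-continuity term, the same bound $|c(s)-1|\le e^{(k+2)\rho\beta}-1$ obtained from locating the exponent in $[k\rho\beta,(k+2)\rho\beta)$, and the same use of $\frac{1}{\beta}\int_0^\beta e^{a(\beta-s)}\,ds=\mathbf{e}(a\beta)$. No gaps.
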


\begin{proof}
Observe that 
\begin{equation}\label{error11}
\begin{array}{ll}
v_k(h_j,g,\beta)&=|\int_0^{\beta}\langle T(\beta-s)h_je^{\rho(t_j-(n-k)\beta-s)}, \frac{g}{\beta}\rangle ds-\langle h_j, g\rangle|\\

&=|\int_0^{\beta}\langle T(\beta-s)h_je^{\rho(t_j-(n-k)\beta-s)}-h_j, \frac{g}{\beta}\rangle ds|\\

&\le\int_0^{\beta}\|T(\beta-s)h_je^{\rho(t_j-(n-k)\beta-s)}-h_j\|\frac{\|g\|}{\beta} ds\\

&\le\frac{\|g\|}{\beta}\int_0^{\beta} \|T(\beta-s)h_je^{\rho(t_j-(n-k)\beta-s)}-T(\beta-s)h_j\|ds\\

&\quad+\frac{\|g\|}{\beta}\int_0^{\beta} \|T(\beta-s)h_j-h_j\|ds\\
&=I_1+I_2.
\end{array}
\end{equation}
Using Proposition \ref{pp1}, i.e.~the inequality $\|T(t)\|\le Me^{at}$, we get
\begin{equation}\label{error12}
\begin{array}{ll}
I_1&=\frac{\|g\|}{\beta}\int_0^{\beta} \|T(\beta-s)h_je^{\rho(t_j-(n-k)\beta-s)}-T(\beta-s)h_j\|ds\\
&\le\frac{\|g\|}{\beta}\int_0^{\beta}\|T(\beta-s)\|\|h_j\|(e^{(k+2)\rho\beta}-1)ds\\

&\le\|g\|M\|h_j\|(e^{(k+2)\rho\beta}-1)\frac{1}{\beta}\int_0^{\beta}e^{a(\beta-s)}ds\\
&\le\|g\|\|h_j\|M(e^{(k+2)\rho\beta}-1)\mathbf{e}(a\beta)
\end{array}
\end{equation}
and
\begin{equation}\label{error13}
\begin{array}{ll}
I_2&=\frac{\|g\|}{\beta}\int_0^{\beta} \|T(\beta-s)h_j-h_j\|ds\\
&=\frac{\|g\|}{\beta}\int_0^{\beta}\|T(s)h_j-h_j\|ds\\
&\le \|g\| \sup\limits_{s\in[0,\beta]}\|T(s)h_j-h_j\|.
\end{array}
\end{equation}
Estimate \eqref{estv1} immediately follows from \eqref{error11}, \eqref{error12}, and \eqref{error13}. We get $\lim\limits_{\beta\to 0}v_k(h_j,g,\beta) = 0$ since
$\lim\limits_{\beta\to 0} (e^{(k+2)\rho\beta}-1)\mathbf e(a\beta) = 0$ and $\lim\limits_{\beta\to 0} \sup\limits_{s\in[0,\beta]}\|T(s)h_j-h_j\|=0$ due to the strong continuity of the semigroup $T$. 
\end{proof}

Equipped with the above observations, we are naturally led to Algorithm 1 below. The algorithm turns out to be robust both with respect to the considered additive measurement noise and introduction of a background source as described in the following Theorem \ref{thm1}.

\bigskip

\begin{tabular}{rp{13cm}}
\toprule
\multicolumn{2}{p{13cm}}{\textbf{Algorithm 1.} Pseudo-code for approximating the time and shape of a possible burst with prescribed exponential decay}\\
\midrule
1:& \textbf{Input:} Measurements: $\mathfrak m_{n}(\frac{g}{\beta})$, $\mathfrak m_n(\frac{T^*(\beta)g}{\beta})$ and threshold: $Q(g,\beta)$, $g\in\widetilde G$\\
2:& Compute $F_i=\mathfrak m_{i+1}(\frac{g}{\beta})-\mathfrak m_i(\frac{T^*(\beta)g}{\beta})$\\
3:& Compute $e^{\rho\beta} F_{i+1}-F_i$\\
4:& \textbf{For} $g\in\widetilde G$ \textbf{do}\\
5:& \quad i=1\\
6:& \quad\textbf{while} $i\beta<T$\\
7:& \quad\quad\textbf{if} $e^{\rho\beta} F_{i+1}-F_i> Q(g,\beta)$\textbf{then}\\
8:& \quad\quad\quad$\mathfrak f(g):=e^{3\rho\beta} F_{i+2}-F_{i-1}$\\
9:& \quad\quad\quad$  \estime :=(i+1)\beta$\\
10:& \quad\quad\quad$i=i+3$\\
11:& \quad\quad\textbf{else}\\
12:& \quad\quad\quad\textbf{if} $e^{\rho\beta}F_{i+2}-F_{i+1}>Q(g,\beta)$ \textbf{then}\\
13:& \quad\quad\quad\quad$\mathfrak f(g):=e^{3\rho\beta} F_{i+3}-F_{i}$\\
14:& \quad\quad\quad\quad$  \estime :=(i+2)\beta$\\
15:& \quad\quad\quad\quad$i=i+3$\\
16:& \quad\quad\quad\textbf{else}\\
17:& \quad\quad\quad\quad$i=i+1$\\
18:& \textbf{Output:} $  \estime $ and $\mathfrak f(g)$ for all $g\in\widetilde G$.\\
\bottomrule
\end{tabular}
\bigskip

\begin{theorem}\label{thm1}
Under Assumptions \ref{as2},\ref{as4},\ref{as5} and \ref{as6}, and
$M, a$ as in Proposition \ref{pp1}, let 
\begin{equation}\label{thresh}
    Q(g,\beta)=e^{(\rho+a)\beta}ML\|g\|\beta+e^{a\beta}(e^{\beta}-1)MK\|g\|+4e^{\rho\beta}\sigma
\end{equation}
be the threshold in Algorithm 1. Let also $\mathfrak t_j$ and $\mathfrak f_j(g)$ be the outputs of Algorithm 1. Then $|\estime_j-t_j|\le \beta$ and
%
\begin{equation}\label{inpest}
\begin{split}
|\mathfrak f_j(g)&-\langle h_{j}, g\rangle|\\
&\le 3e^{(3\rho+a)\beta}ML\|g\|\beta+e^{a\beta}(e^{3\rho\beta}-1)MK\|g\|+4e^{3\rho\beta}\sigma\\ &\quad+2e^{\rho\beta}Q(g,\beta)+\max\{v_0(h_j,g,\beta), v_1(h_j,g,\beta)\},
\end{split}
\end{equation}
where $v_k$, $k=0,1$, are given by \eqref{estv1}. In particular, for sufficiently small $\beta>0$, one has
$|\mathfrak f_j(g)-\langle h_{j}, g\rangle| \le 13\sigma$ as long as $\sigma > 0$.

\end{theorem}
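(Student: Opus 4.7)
\textit{Proof plan.} The proof reduces to controlling the deviation of $F_n$, $\Delta_n:=e^{\rho\beta}F_{n+1}-F_n$, and $e^{3\rho\beta}F_{n+2}-F_{n-1}$ from their ``ideal'' (no-$\eta$, no-$\nu$) values, which have already been computed explicitly in the excerpt. The natural decomposition is $\mathfrak m_n = \text{(burst+initial)} + \text{(background)} + \nu$, and one propagates this split through the algorithm's arithmetic. The noise part of any single $F_n$ is at most $2\sigma$ by Assumption \ref{as5}. For the background part, the crucial fact is that the semigroup identity $\langle T(n\beta-s)\eta(s),T^*(\beta)g\rangle=\langle T((n+1)\beta-s)\eta(s),g\rangle$---which is exactly what makes the pair $(g,T^*(\beta)g)$ predictive---collapses the background contribution to $F_n$ into a single integral over $[n\beta,(n+1)\beta)$.

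For the threshold and the time estimate, I would split the background part of $\Delta_n$ via $e^{\rho\beta}\eta((n+1)\beta+r)-\eta(n\beta+r)=e^{\rho\beta}[\eta((n+1)\beta+r)-\eta(n\beta+r)]+(e^{\rho\beta}-1)\eta(n\beta+r)$ and bound each piece using Assumption \ref{as4} (Lipschitz with constant $L$ and uniform bound $K$) together with Proposition \ref{pp1}; this produces the first two summands of $Q(g,\beta)$, while accounting for the two noise terms in each of $F_{n+1}$ and $F_n$ yields $4e^{\rho\beta}\sigma$. Combined with the ideal identity \eqref{keyobs}, one obtains $|\Delta_n|\le Q(g,\beta)$ whenever there is no burst in $[n\beta,(n+2)\beta)$. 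Contrapositively, whenever Algorithm 1 fires, a burst must lie in the window flagged by the check that fired; by Assumption \ref{as6}.1 this burst is unique, and the claim $|\estime_j-t_j|\le\beta$ is immediate from lines~9 and~14.

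For the shape bound I would apply the same decomposition to $\mathfrak f_j(g)$. The ideal part, using Assumption \ref{as6}.1 and the calculation preceding Lemma \ref{lem_inpr}, collapses to $\int_0^\beta\langle T(\beta-s)h_je^{\rho(t_j-(n-k)\beta-s)},g/\beta\rangle ds$ with $k\in\{0,1\}$ determined by a four-way sub-case analysis (first vs.\ second check firing, and $t_j$ in the lower vs.\ upper half of the flagged window); Lemma \ref{lem_inpr} then bounds its deviation from $\langle h_j,g\rangle$ by $\max\{v_0,v_1\}$. The noise-and-background deviation of $\mathfrak f_j(g)$ is bounded in exactly the same manner as for $\Delta_n$ but now over a $3\beta$ time span (so $L\beta$ becomes $3L\beta$) and with the weight $e^{3\rho\beta}$, producing the first three summands of \eqref{inpest}. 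The additional $2e^{\rho\beta}Q(g,\beta)$ appears from the $\Delta$-telescoping $e^{3\rho\beta}F_{i+2}-F_{i-1}=\Delta_{i-1}+e^{\rho\beta}\Delta_i+e^{2\rho\beta}\Delta_{i+1}$ (and its analogue for the second-check formula): depending on the sub-case, one or two of these $\Delta_n$'s correspond to windows outside the burst location and contribute at most $Q(g,\beta)$ apiece. Passing $\beta\to 0^+$, the Lipschitz- and $K$-terms are $O(\beta)$ and disappear, $\max\{v_0,v_1\}\to 0$ by the last line of Lemma \ref{lem_inpr}, and the surviving noise contributions $4e^{3\rho\beta}\sigma+2e^{\rho\beta}Q(g,\beta)$ approach $4\sigma+8\sigma=12\sigma\le 13\sigma$.

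The main obstacle is the sub-case bookkeeping for the shape bound: one must pair each firing scenario of Algorithm 1 with the correct value of $k$ in the definition \eqref{v_1} of $v_k$, and, most delicately, verify via Assumption \ref{as6}.1 that every burst other than $t_j$ makes zero net contribution to the ideal value of $\mathfrak f_j(g)$---the cancellation being the same telescoping that produced \eqref{keyobs}. Everything else is a careful but routine combination of the Lipschitz/uniform bounds on $\eta$, the noise bound from Assumption \ref{as5}, and the semigroup growth estimate of Proposition \ref{pp1}.
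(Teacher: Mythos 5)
Your plan for the threshold bound, the time estimate, and the detected-burst case matches the paper's proof essentially step for step: the same decomposition of $F_n$ into ideal, background and noise parts, the same split $e^{\rho\beta}\eta((n+1)\beta+r)-\eta(n\beta+r)=e^{\rho\beta}\bigl[\eta((n+1)\beta+r)-\eta(n\beta+r)\bigr]+(e^{\rho\beta}-1)\eta(n\beta+r)$ producing the three summands of $Q(g,\beta)$, and the same direct expansion of $e^{3\rho\beta}F_{n+2}-F_{n-1}$ (resp.\ $e^{3\rho\beta}F_{n+3}-F_{n}$) whose ideal part is compared with $\langle h_j,g\rangle$ via Lemma \ref{lem_inpr} with $k=1$ (resp.\ $k=0$).

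There is, however, a genuine gap: you never treat the case in which the $j$-th burst occurs but \emph{neither} check fires for a given sampler $g$ --- which happens exactly when $\langle h_j,g\rangle$ is small relative to $Q(g,\beta)$, and note that detection is $g$-dependent. In that case the algorithm outputs nothing, so \eqref{inpest} must be verified with $\mathfrak f_j(g)=0$, i.e.\ one must prove that $|\langle h_j,g\rangle|$ itself is bounded by the right-hand side. This undetected case is precisely where the term $2e^{\rho\beta}Q(g,\beta)$ comes from in the paper: one writes $e^{2\rho\beta}F_{n+2}-F_n=e^{\rho\beta}\Delta_{n+1}+\Delta_n$, uses that both $|\Delta_n|$ and $|\Delta_{n+1}|$ are at most $Q(g,\beta)$ because neither check fired, and compares this with the direct expansion of $e^{2\rho\beta}F_{n+2}-F_n$, whose ideal part is $\int_0^\beta\langle T(\beta-s)h_je^{\rho(t_j-n\beta-s)},g/\beta\rangle\,ds$, to conclude $|\langle h_j,g\rangle|\le 2e^{2\rho\beta}CL\|g\|\beta+(e^{2\rho\beta}-1)CK\|g\|+4e^{2\rho\beta}\sigma+2e^{\rho\beta}Q(g,\beta)+v_0(h_j,g,\beta)$. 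Your alternative account of the $2e^{\rho\beta}Q$ term --- telescoping $e^{3\rho\beta}F_{n+2}-F_{n-1}=\Delta_{n-1}+e^{\rho\beta}\Delta_n+e^{2\rho\beta}\Delta_{n+1}$ in the \emph{detected} case --- does not work as stated: of the three summands only $\Delta_{n-1}$ is burst-free, while the remaining pair satisfies $e^{\rho\beta}\Delta_n+e^{2\rho\beta}\Delta_{n+1}=e^{\rho\beta}\bigl(e^{2\rho\beta}F_{n+2}-F_n\bigr)$ and thus approximates $e^{\rho\beta}\langle h_j,g\rangle$ rather than $\langle h_j,g\rangle$, leaving an uncontrolled remainder $(e^{\rho\beta}-1)|\langle h_j,g\rangle|$; in the detected case the direct expansion already gives the bound with no $Q$ term at all, the $2e^{\rho\beta}Q(g,\beta)$ in \eqref{inpest} being slack there. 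Once you add the undetected-burst case and drop the telescoping heuristic, the argument closes, and your limit computation $4\sigma+8\sigma=12\sigma<13\sigma$ is correct as stated.
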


\begin{proof}
Adjusting the computations in the ideal case to account for the noise and the background source, we get
\[
\begin{array}{ll}
F_n&=\mathfrak m_{n+1}(\frac{g}{\beta})-\mathfrak m_n(\frac{T^*(\beta)g}{\beta})\\





&=\int_{\tau_n}^{(n+1)\beta}\langle T((n+1)\beta-s)f_n e^{\rho(\tau_n-s)}, \frac{g}{\beta}\rangle ds+\sum\limits_{\tau_i<n\beta}\int_0^{\beta}\langle T(\beta-s)f_i e^{\rho(\tau_i-n\beta-s)}, \frac{g}{\beta}\rangle ds\\

&\quad+\int_0^{\beta}\langle T(\beta-s)\eta(n\beta+s), \frac{g}{\beta}\rangle ds+\nu((n+1)\beta, \frac{g}{\beta})-\nu(n\beta, \frac{T^*(\beta)g}{\beta}).\\
\end{array}
\]
The difference $\Delta_n=e^{\rho\beta}F_{n+1}-F_n$ not only allows us to detect the burst in the ideal case (where it kills the effect of the past bursts according to \eqref{keyobs}), but also, as we shall see presently, mitigates the effect of the background source. Once again, adjusting the previous computations for noise and background source, we get
%
%
\[
\begin{array}{ll}
\Delta_n&=e^{\rho\beta}F_{n+1}-F_n\\

&=\int_{\tau_{n+1}}^{(n+2)\beta}\langle T((n+2)\beta-s)f_{n+1} e^{\rho(\tau_{n+1}+\beta-s)}, \frac{g}{\beta}\rangle ds+\int_{n\beta}^{\tau_n}\langle T((n+1)\beta-s)f_n e^{\rho(\tau_n-s)}, \frac{g}{\beta}\rangle ds\\

&\quad+e^{\rho\beta}\int_0^{\beta}\langle T(\beta-s)\eta((n+1)\beta+s), \frac{g}{\beta}\rangle ds-\int_0^{\beta}\langle T(\beta-s)\eta(n\beta+s), \frac{g}{\beta}\rangle ds+\alpha_n,
\end{array}
\]
where $\alpha_n=e^{\rho\beta}\nu((n+2)\beta, \frac{g}{\beta})-e^{\rho\beta}\nu((n+1)\beta, \frac{T^*(\beta)g}{\beta})-\nu((n+1)\beta, \frac{g}{\beta})+\nu(n\beta, \frac{T^*(\beta)g}{\beta}).$

We remark that Assumption \ref {as6} ($ t_{j+1}-t_j>4\beta)$ implies that, at  most one of the terms $f_n$, $f_{n+1}$ is non-zero in the expression  of $e^{\rho\beta}F_{n+1}-F_n$ above. 

Firstly,  we  prove  that if no burst occurred in $[n\beta, (n+2)\beta)$ (i.e. $f_n=f_{n+1}=0$), then $|\Delta_n|$ is below our chosen threshold \eqref{thresh}. 
The proof is achieved via the following computations that make use of Assumptions \ref{as4} and \ref{as5}:
\begin{equation} \label{Thresholderr} 
\begin{array}{ll}
\lvert\Delta_n\rvert&=|e^{\rho\beta}F_{n+1}-F_n|\\
&=|e^{\rho\beta}\int_0^{\beta}\langle T(\beta-s)\eta((n+1)\beta+s), \frac{g}{\beta}\rangle ds-\int_0^{\beta}\langle T(\beta-s)\eta(n\beta+s), \frac{g}{\beta}\rangle ds+\alpha_n|\\

&\le |e^{\rho\beta}\int_0^{\beta}\langle T(\beta-s)\eta((n+1)\beta+s), \frac{g}{\beta}\rangle ds-e^{\rho\beta}\int_0^{\beta}\langle T(\beta-s)\eta(n\beta+s), \frac{g}{\beta}\rangle ds|\\

&\quad+|e^{\rho\beta}\int_0^{\beta}\langle T(\beta-s)\eta(n\beta+s), \frac{g}{\beta}\rangle ds-\int_0^{\beta}\langle T(\beta-s)\eta(n\beta+s), \frac{g}{\beta}\rangle ds|+|\alpha_n|\\

&\le e^{\rho\beta}\int_0^{\beta}\|\eta((n+1)\beta+s)-\eta(n\beta+s)\|\|T^*(\beta-s)\frac{g}{\beta}\|ds\\
&\quad+(e^{\rho\beta}-1)\int_0^{\beta}\|\eta(n\beta+s)\|\|T^*(\beta-s)\frac{g}{\beta}\|ds+4e^{\rho\beta}\sigma\\

&\le e^{\rho\beta}CL\|g\|\beta+(e^{\rho\beta}-1)CK\|g\|+4e^{\rho\beta}\sigma=Q(g,\beta)\\
\end{array}
\end{equation}
where $C=Me^{a\beta}$ so that $\|T^*(\beta-s)\|\le C.$

Secondly, assume that the $j$-th burst with the shape $h_j$ occurred in the interval $[(n+1)\beta, (n+2)\beta)$. To analyze this situation, we will look at two cases: 1)  the $j$-th burst is detected by our algorithm; and 2) the $j$-th burst is not detected. For Case 1, the burst is detected if and only if $\lvert \Delta_n\rvert>Q(g,\beta)$ or $\lvert \Delta_{n+1}\rvert>Q(g,\beta)$ and we need to prove \eqref{inpest}. For Case 2, when the $j$-th burst  is not detected, we will show that $\langle h_j,g\rangle$ is small, i.e. \eqref{inpest} holds with $\mathfrak f_j(g)=0$.\\

\medskip\noindent
{\it Case 1.} The $j$-th burst is detected in $[(n+1)\beta, (n+2)\beta)$. 
\medskip

Assume that $\lvert \Delta_{n}\rvert>Q(g,\beta)$. Then $\tau_{n+1}=t_j$, $f_{n+1}=h_j$, $f_{n-1}=f_{n}=f_{n+2}=0$ and Algorithm 1 returns  $\estime_j=(n+1)\beta$ and $\mathfrak f_j(g) = e^{3\rho\beta}F_{n+2}-F_{n-1}$. We need to establish \eqref{inpest}, i.e.
show that  $\langle h_j,g\rangle\approx e^{3\rho\beta}F_{n+2}-F_{n-1}$ for small $\beta$. We get
 
 \[
\begin{split}
 &\quad e^{3\rho\beta}F_{n+2}-F_{n-1}\\
 &=\int_0^{\beta}\langle T(\beta-s)h_j e^{\rho(t_j-(n-1)\beta-s)}, \frac{g}{\beta}\rangle ds+e^{3\rho\beta}\int_0^{\beta}\langle T(\beta-s)\eta((n+2)\beta+s), \frac{g}{\beta}\rangle ds\\
&\quad-\int_0^{\beta}\langle T(\beta-s)\eta((n-1)\beta+s), \frac{g}{\beta}\rangle ds+\alpha_{n-1}^{'}
\end{split}
\]
where $\alpha_{n-1}^{'}=e^{3\rho\beta}\nu((n+3)\beta, \frac{g}{\beta})-e^{3\rho\beta}\nu((n+2)\beta, \frac{T^*(\beta)g}{\beta})-\nu(n\beta, \frac{g}{\beta})+\nu((n-1)\beta, \frac{T^*(\beta)g}{\beta}).$ Therefore,
 \begin{equation*}
\begin{array}{ll}
&\quad|\mathfrak f_j(g)-\langle h_j, g\rangle|\\
&=|e^{3\rho\beta}F_{n+2}-F_{n-1}-\langle h_j, g\rangle|\\
&\le|\int_0^{\beta}\langle T(\beta-s)h_je^{\rho(t_j-(n-1)\beta-s)}, \frac{g}{\beta}\rangle ds-\langle h_j, g\rangle|\\

&\quad+|e^{3\rho\beta}\int_0^{\beta}\langle T(\beta-s)\eta((n+2)\beta+s), \frac{g}{\beta}\rangle ds-\int_0^{\beta}\langle T(\beta-s)\eta((n-1)\beta+s) \frac{g}{\beta}\rangle ds|+4e^{3\rho\beta}\sigma\\

&\le v_1(h_j,g,\beta)+3e^{3\rho\beta}CL\|g\|\beta+(e^{3\rho\beta}-1)CK\|g\|+4e^{3\rho\beta}\sigma,
\end{array}
\end{equation*}
 where $C=Me^{a\beta}$ and $v_1(h_j,g,\beta)$ is given by \eqref{estv1}. Thus, estimate \eqref{inpest} is established for the case when $\lvert \Delta_{n}\rvert>Q(g,\beta)$.
 
Assume now that $\lvert \Delta_{n}\rvert\le Q(g,\beta)$ and $\lvert \Delta_{n+1}\rvert>Q(g,\beta)$. In this case, Algorithm 1 returns $\estime_j=(n+2)\beta$ and $\mathfrak f_j(g)=e^{3\rho\beta}F_{n+3}-F_{n}$. In particular, 
 \begin{equation*}
\begin{array}{ll}
&\quad|\mathfrak f_j(g)-\langle h_j, g\rangle|\\
&\le|\int_0^{\beta}\langle T(\beta-s)h_je^{\rho(t_j-n\beta-s)}, \frac{g}{\beta}\rangle ds-\langle h_j, g\rangle|\\
&\quad+3e^{3\rho\beta}CL\|g\|\beta+(e^{3\rho\beta}-1)CK\|g\|+4e^{3\rho\beta}\sigma,\\
\end{array}
\end{equation*}
 where $|\int_0^{\beta}\langle T(\beta-s)h_je^{\rho(t_j-n\beta-s)}, \frac{g}{\beta}\rangle ds-\langle h_j, g\rangle|=v_0(h_j,g,\beta)$ is given by \eqref{estv1}. Thus, estimate \eqref{inpest} holds when $\lvert \Delta_{n+1}\rvert>Q(g,\beta)$ as well, and Case 1 is covered.
 

\bigskip\noindent 
{\it Case 2.} The $j$-th burst is in $[(n+1)\beta, (n+2)\beta)$, but $\langle h_j, g\rangle$ is too small to be detected. 
\medskip

We need to show that \eqref{inpest} holds with $\mathfrak f_j(g)=0$. We have

\begin{equation} \label{diff2i}
\begin{array}{ll}
&\quad e^{2\rho\beta}F_{n+2}-F_n\\
&=e^{\rho\beta} \Delta_{n+1}+ \Delta_{n}\\
&=\int_0^{\beta}\langle T(\beta-s)h_j e^{\rho(t_j-n\beta-s)}, \frac{g}{\beta}\rangle ds+e^{2\rho\beta}\int_0^{\beta}\langle T(\beta-s)\eta((n+2)\beta+s), \frac{g}{\beta}\rangle ds\\

&\quad-\int_0^{\beta}\langle T(\beta-s)\eta(n\beta+s), \frac{g}{\beta}\rangle ds+\alpha_n^{''}\\
\end{array}
\end{equation}
where $\alpha_n^{''}=e^{2\rho\beta}\nu((n+3)\beta, \frac{g}{\beta})-e^{2\rho\beta}\nu((n+2)\beta, \frac{T^*(\beta)g}{\beta})-\nu((n+1)\beta, \frac{g}{\beta})+\nu(n\beta, \frac{T^*(\beta)g}{\beta}).$ Using \eqref{diff2i} to estimate $\langle h_j, g\rangle$, we get

\begin{equation*}
\begin{array}{ll}
&\quad|\langle h_j, g\rangle|\\
&\le|-\int_0^{\beta}\langle T(\beta-s)h_je^{\rho(t_j-n\beta-s)}, \frac{g}{\beta}\rangle ds|+|\int_0^{\beta}\langle T(\beta-s)h_je^{\rho(t_j-n\beta-s)}, \frac{g}{\beta}\rangle ds-\langle h_j, g\rangle|\\

&=|-\int_0^{\beta}\langle T(\beta-s)h_je^{\rho(t_j-n\beta-s)}, \frac{g}{\beta}\rangle ds+(e^{2\rho\beta}F_{n+2}-F_n)-(e^{2\rho\beta}F_{n+2}-F_n)|+v_0(h_j,g,\beta)\\
    
&\le|-\int_0^{\beta}\langle T(\beta-s)h_je^{\rho(t_j-n\beta-s)}, \frac{g}{\beta}\rangle ds+(e^{2\rho\beta}F_{n+2}-F_n)|+|e^{2\rho\beta}F_{n+2}-F_n|+v_0(h_j,g,\beta)\\
&\le|e^{2\rho\beta}\int_0^{\beta}\langle T(\beta-s)\eta((n+2)\beta+s), \frac{g}{\beta}\rangle ds-\int_0^{\beta}\langle T(\beta-s)\eta(n\beta+s), \frac{g}{\beta}\rangle ds+\alpha_n''|\\

&\quad+e^{\rho\beta}|e^{\rho\beta}F_{n+2}-F_{n+1}|+|e^{\rho\beta}F_{n+1}-F_n|+v_0(h_j,g,\beta)\\

&\le 2e^{2\rho\beta}CL\|g\|\beta+(e^{2\rho\beta}-1)CK\|g\|+4e^{2\rho\beta}\sigma+2e^{\rho\beta}Q(g,\beta)+v_0(h_j,g,\beta),\\
\end{array}
\end{equation*}
where we have used the fact that $|e^{\rho\beta}F_{n+2}-F_{n+1}|\le Q(g,\beta)$, $|e^{\rho\beta}F_{n+1}-F_{n}|\le Q(g,\beta)$, and  estimated the term
$|e^{2\rho\beta}\int_0^{\beta}\langle T(\beta-s)\eta((n+2)\beta+s), \frac{g}{\beta}\rangle ds-\int_0^{\beta}\langle T(\beta-s)\eta(n\beta+s), \frac{g}{\beta}\rangle ds+\alpha_n''|$ in a similar way as \eqref {Thresholderr}.
The above estimates establish \eqref{inpest} in Case 2, and the theorem is proved. 
\end{proof}

\subsection{Model with general decay function}\label{model2}

In this section, we consider the same dynamic system, but we discuss a more general situation. Here the decay function $\phi(t)$ does not have a concrete formula, but its decay velocity is restricted. The model is as follows
\begin{equation}
\left\{
\begin{array}{l}
\dot{u}(t)=Au(t)+\sum\limits_j^N h_j\phi(t-t_j)\chi_{[t_j, \infty)}(t)+\eta,\\
u(0)=u_0\\
\end{array}
\right.
\end{equation}
where the function $\phi(t)$ is a continuous function on $[0,\infty)$ satisfying $\phi(0)=1$ and 
\begin{equation}\label{condition1}
    0<\phi(t)\le e^{-\rho t}
\end{equation}
for some $\rho>0$.

In this model, we continue to use the constants introduced in Proposition \ref{pp1} and Assumptions \ref{as2} to \ref{as6}, as well as $C=Me^{a\beta}$. We also assume that the bursts are uniformly bounded as mentioned in Assumption \ref{as3}.
The constant $D$ in Assumption \ref{as6}  that controls the time gap between the bursts ($t_{j+1}-t_j\ge D+4\beta$) is chosen in a way that 
\begin{equation}\label{ep}
\epsilon=\frac{2}{e^{\rho D}-1}CHR    
\end{equation}
is a small quantity. 
We shall also need the following modification of the technical lemma \ref{lem_inpr}.

\begin{lemma}\label{lem_inpr2}
Assume that $t_j\in [(n+1)\beta, (n+2)\beta)$ and 
\begin{equation}\label{v_2}
v_{k,i}(h_j,g,\beta)=\left|\int_0^{\beta}\langle T(\beta-s)h_je^{k\rho\beta}\phi((n+i)\beta+s-t_j), \frac{g}{\beta}\rangle ds-\langle h_j, g\rangle\right|,\ k,i=2,3. 
\end{equation}
Then
\begin{equation}\label{estvk}
    v_{k,i}(h_j,g,\beta) \le \|g\|\left(
    \|h_j\|M\max\limits_{s\in[(i-2)\beta,i\beta]}|e^{k\rho\beta}\phi(s)-1|
    \mathbf{e}(a\beta)+\sup\limits_{s\in[0,\beta]}\|T(s)h_j-h_j\|\right),
\end{equation}
where $\mathbf{e}$ is given by \eqref{bfe}.
In particular, $v_{k,i}(h_j,g,\beta) \rightarrow 0$ as $\beta \rightarrow 0$, $k,i=2,3$.
\end{lemma}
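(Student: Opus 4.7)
The plan is to mirror the argument used for Lemma~\ref{lem_inpr}, replacing the explicit exponential decay factor $e^{\rho(t_j-(n-k)\beta-s)}$ with the generic factor $e^{k\rho\beta}\phi((n+i)\beta+s-t_j)$, and absorbing the resulting extra terms into a single maximum over a controlled interval. The key structural observation is that, since $t_j\in[(n+1)\beta,(n+2)\beta)$ and $s\in[0,\beta]$, the argument of $\phi$ satisfies
\[
(n+i)\beta+s-t_j \in \bigl((i-2)\beta,\, i\beta\bigr]\subseteq[(i-2)\beta,\,i\beta],\qquad i=2,3,
\]
so the scalar $e^{k\rho\beta}\phi\bigl((n+i)\beta+s-t_j\bigr)-1$ is pointwise bounded in absolute value by $\max_{r\in[(i-2)\beta,i\beta]}|e^{k\rho\beta}\phi(r)-1|$.

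First, I would bring the inner product inside the integral and apply Cauchy--Schwarz to get
\[
v_{k,i}(h_j,g,\beta) \le \frac{\|g\|}{\beta}\int_0^\beta \bigl\| T(\beta-s)h_j\,e^{k\rho\beta}\phi((n+i)\beta+s-t_j) - h_j\bigr\|\,ds.
\]
Then I would split the integrand as $T(\beta-s)h_j\bigl[e^{k\rho\beta}\phi(\cdot)-1\bigr] + \bigl[T(\beta-s)h_j-h_j\bigr]$, producing two pieces $I_1$ and $I_2$ exactly as in \eqref{error11}. For $I_1$, apply Proposition~\ref{pp1} to bound $\|T(\beta-s)\|\le Me^{a(\beta-s)}$, pull out the scalar by the maximum described above, and use $\frac{1}{\beta}\int_0^\beta e^{a(\beta-s)}\,ds = \mathbf e(a\beta)$; this yields
\[
I_1 \le \|g\|\,\|h_j\|\,M\,\max_{r\in[(i-2)\beta,i\beta]}\bigl|e^{k\rho\beta}\phi(r)-1\bigr|\,\mathbf e(a\beta).
\]
For $I_2$, the substitution $s\mapsto \beta-s$ and the trivial bound give $I_2\le \|g\|\sup_{s\in[0,\beta]}\|T(s)h_j-h_j\|$, as in \eqref{error13}. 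Summing yields \eqref{estvk}.

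For the final assertion that $v_{k,i}(h_j,g,\beta)\to 0$ as $\beta\to 0$, note that $\mathbf e(a\beta)\to 1$ (so it stays bounded), and the interval $[(i-2)\beta,i\beta]$ collapses to $\{0\}$ for both $i=2$ and $i=3$. Since $\phi$ is continuous on $[0,\infty)$ with $\phi(0)=1$, and $e^{k\rho\beta}\to 1$, the maximum $\max_{r\in[(i-2)\beta,i\beta]}|e^{k\rho\beta}\phi(r)-1|$ tends to $|1\cdot 1-1|=0$. Strong continuity of $T$ handles the last term.

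The only point that requires any care is the range computation for the $\phi$-argument in the case $i=3$, where one must check that $[(i-2)\beta,i\beta]=[\beta,3\beta]$ still shrinks to the origin as $\beta\to 0$ (rather than being bounded away from $0$), so that continuity of $\phi$ at $0$ can be invoked; otherwise the argument is a direct transcription of the proof of Lemma~\ref{lem_inpr} with the abstract decay factor in place of the explicit exponential one, and no new idea is needed.
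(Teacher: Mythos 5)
Your proposal is correct and follows essentially the same route as the paper's proof: the same split into $I_1+I_2$ as in \eqref{error11}, the same bound on $I_1$ via Proposition \ref{pp1} and the maximum of $|e^{k\rho\beta}\phi(\cdot)-1|$ over $[(i-2)\beta,i\beta]$, and the same treatment of $I_2$ as in \eqref{error13}. Your explicit verification that the argument of $\phi$ lands in $((i-2)\beta,i\beta]$ and that this interval collapses to the origin (so continuity of $\phi$ at $0$ with $\phi(0)=1$ applies) is exactly the point the paper leaves implicit.
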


\begin{proof}
Similarly to \eqref{error11}, we separate each $v_{k,i}(h_j,g,\beta)$, $k,i=2,3$, into two parts:

\[
\begin{array}{ll}
&\quad|\int_0^{\beta}\langle T(\beta-s)h_je^{k\rho\beta}\phi((n+i)\beta+s-t_j), \frac{g}{\beta}\rangle ds-\langle h_j, g\rangle|\\

&\le\frac{\|g\|}{\beta}\int_0^{\beta} \|T(\beta-s)h_je^{k\rho\beta}\phi((n+i)\beta+s-t_j)-T(\beta-s)h_j\|ds\\

&\quad+\frac{\|g\|}{\beta}\int_0^{\beta} \|T(\beta-s)h_j-h_j\|ds\\
&=I_1+I_2.
\end{array}
\]

Estimate for $I_2$ is still given by \eqref{error13}.
For $I_1$, by $\|T(t)\|\le Me^{at}$, we have
\[
\begin{array}{ll}
I_1&=\frac{\|g\|}{\beta}\int_0^{\beta} \|T(\beta-s)h_je^{k\rho\beta}\phi((n+i)\beta+s-t_{j})-T(\beta-s)h_j\|ds\\
&\le\frac{\|g\|}{\beta}\int_0^{\beta}\|T(\beta-s)\|\|h_j\||e^{k\rho\beta}\phi((n+i)\beta+s-t_{j})-1|ds\\

&=\|g\|\|h_j\|M\max\limits_{s\in[(i-2)\beta,i\beta]}|e^{k\rho\beta}\phi(s)-1|\mathbf{e}(a\beta).
\end{array}
\]
By the assumption on $\phi$, $I_1\rightarrow 0$ as $\beta\rightarrow 0$.
\end{proof}

\bigskip
\begin{tabular}{rp{13cm}}
\toprule
\multicolumn{2}{p{13cm}}{\textbf{Algorithm 2} Pseudo-code for approximating the time and shape of a possible burst with varying decay}\\
\midrule
1:&\textbf{Input:} Measurements: $\mathfrak m_{n}(\frac{g}{\beta})$, $\mathfrak m_n(\frac{T^*(\beta)g}{\beta})$; threshold: $Q_1(g,\beta)$, for $g\in\widetilde G$; a parameter $D>0$\\
2:&Compute $F_i=\mathfrak m_{i+1}(\frac{g}{\beta})-\mathfrak m_i(\frac{T^*(\beta)g}{\beta})$\\
3:&Compute $e^{\rho\beta} F_{i+1}-F_i$\\
4:&\textbf{For} $g\in\widetilde G$ \textbf{do}\\
5:& i=1\\
6:&\quad\textbf{while} $i\beta<T$ \textbf{do}\\
7:&\quad\quad\textbf{if} $e^{\rho\beta} F_{i+1}-F_i>Q_1(g,\beta)$ \textbf{then}\\
8:&\quad\quad\quad$\mathfrak f(g):=e^{3\rho\beta} F_{i+2}-F_{i-1}$\\
9:&\quad\quad\quad$  \estime: =(i+1)\beta$\\
10:&\quad\quad\quad$i=i+3+\lfloor\frac{D}{\beta}\rfloor$\\ 
11:&\quad\quad\textbf{else}\\
12:&\quad\quad\quad\textbf{if} $e^{\rho\beta} F_{i+2}-F_{i+1}>Q_1(g,\beta)$ \textbf{then}\\
13:&\quad\quad\quad\quad$\mathfrak f(g):=e^{3\rho\beta} F_{i+3}-F_{i}$\\
14:&\quad\quad\quad\quad$  \estime: =(i+2)\beta$\\
15:&\quad\quad\quad\quad$i=i+3+\lfloor\frac{D}{\beta}\rfloor$\\
16:&\quad\quad\quad\textbf{else}\\
17:&\quad\quad\quad\quad$i=i+1$\\
18:&\textbf{Output:} $  \estime $ and $\mathfrak f(g)$ for all $g\in\widetilde G$.\\
\bottomrule
\end{tabular}
\bigskip

\begin{theorem}\label{thm2}
Under Assumptions \ref{as2} to \ref{as6}, $Q(g,\beta)$ given by \eqref{thresh}, and $\epsilon$ -- by \eqref{ep},
let
\begin{equation}\label{thresh1}
    Q_1(g,\beta)=Q(g,\beta)+\epsilon 
\end{equation}
be the threshold in Algorithm 2. Let also $\mathfrak t_j$ and $\mathfrak f_j(g)$ be the outputs of Algorithm 2. Then $|\estime_j-t_j|\le \beta$ and

\begin{equation}\label{quest}
\begin{split}
|\mathfrak f_j(g) &-\langle h_{j}, g\rangle|  \\ &\le\epsilon+3e^{(3\rho+a)\beta}ML\|g\|\beta+e^{a\beta}(e^{3\rho\beta}-1)MK\|g\|+4e^{3\rho\beta}\sigma \\
&\quad+2e^{\rho\beta}Q_1(g,\beta)+\max\{v_{3,2}(h_j,g,\beta), v_{3,3}(h_j,g,\beta),v_{2,2}(h_j,g,\beta)\}
\end{split}
\end{equation}
where $v_{k,i}$, $k,i=2,3$, are defined by \eqref{v_2} so that 
$v_{k,i}(h_j,g,\beta)\rightarrow 0$ as $\beta\rightarrow 0$ (by Lemma \ref{lem_inpr2}).
In particular, for sufficiently small $\beta>0$, one has $|\mathfrak f_j(g)-\langle h_{j}, g\rangle| \le 13\sigma+4\epsilon$  as long as $\sigma$ and $\epsilon $ are not both $0$.
\end{theorem}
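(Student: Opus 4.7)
The plan is to mirror the proof of Theorem~\ref{thm1}, quantifying the extra error introduced by replacing the exact exponential decay with a generic $\phi$ that only satisfies $\phi(t)\le e^{-\rho t}$. The crucial new ingredient is a uniform bound on the residual contribution of past bursts to the key difference $\Delta_n := e^{\rho\beta}F_{n+1}-F_n$: in the non-exponential setting, this difference no longer annihilates past-burst contributions exactly, and one must control what remains.

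Reworking the computation of $F_n$ with $\phi$ in place of the exponential factor leads to a sum of a current-interval integral, a sum over past bursts, and the same noise-plus-background remainder as in Theorem~\ref{thm1}. When $\Delta_n$ is formed, the current integrals combine as in the exponential case, but the past-burst sums now leave a residual
\[
\sum_{\tau_i<n\beta}\int_0^\beta \langle T(\beta-s)\,f_i\bigl[e^{\rho\beta}\phi((n{+}1)\beta+s-\tau_i)-\phi(n\beta+s-\tau_i)\bigr],\tfrac{g}{\beta}\rangle\,ds.
\]
Using $\|f_i\|\le H$ (Assumption~\ref{as3}), $\|g\|\le R$ (Assumption~\ref{as2}), $\|T(\beta-s)\|\le C$ (Proposition~\ref{pp1}), and $\phi(t)\le e^{-\rho t}$, each summand is dominated by $2CHR\cdot e^{-\rho(n\beta+s-\tau_i)}$. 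The spacing $t_{j+1}-t_j\ge D+4\beta$ from Assumption~\ref{as6} forces the elapsed time to the $k$-th most recent past burst to be at least $kD$, so a geometric tail bound gives
\[
|\text{past-burst residual}|\le 2CHR\sum_{k\ge 1}e^{-\rho kD}=\frac{2}{e^{\rho D}-1}\,CHR=\epsilon.
\]

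With this $\epsilon$-correction in place, the no-burst case ($f_n=f_{n+1}=0$) proceeds exactly as in~\eqref{Thresholderr}, yielding $|\Delta_n|\le Q(g,\beta)+\epsilon=Q_1(g,\beta)$; the time-localization $|\estime_j-t_j|\le\beta$ then follows from the same detection logic as in Theorem~\ref{thm1}. For the detected-burst shape estimate, I would expand $e^{3\rho\beta}F_{n+2}-F_{n-1}=e^{2\rho\beta}\Delta_{n+1}+e^{\rho\beta}\Delta_n+\Delta_{n-1}$; after absorbing the past-burst residuals into the $\epsilon$ that sits inside $Q_1$, using Assumption~\ref{as6} to kill future-burst contributions via the enlarged gap $D+4\beta$, and bounding noise and background terms as before, only the contribution from the burst at $t_j$ itself survives. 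It differs from $\langle h_j,g\rangle$ by $v_{3,2}(h_j,g,\beta)$ when $|\Delta_n|>Q_1$ and by $v_{3,3}(h_j,g,\beta)$ when only $|\Delta_{n+1}|>Q_1$, both controlled by Lemma~\ref{lem_inpr2}. The undetected case is again handled via $e^{2\rho\beta}F_{n+2}-F_n=e^{\rho\beta}\Delta_{n+1}+\Delta_n$, with each $\Delta$ bounded by $Q_1$ and the main burst term now accounting for $v_{2,2}(h_j,g,\beta)$.

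The principal obstacle is the careful bookkeeping of the past-burst residual across the three quantities $\Delta_n$, $e^{3\rho\beta}F_{n+2}-F_{n-1}$, and $e^{2\rho\beta}F_{n+2}-F_n$: one must verify each time that the combination of the envelope $\phi(t)\le e^{-\rho t}$ with the spacing $D+4\beta$ yields a bound compatible with the single $\epsilon$ appearing in~\eqref{quest} and with the $\epsilon$ baked into $Q_1$. Once this accounting is set up cleanly, the remainder of the argument is a direct transcription of the proof of Theorem~\ref{thm1}, replacing $v_k$ by $v_{k,i}$ (Lemma~\ref{lem_inpr2}) and $Q$ by $Q_1$.
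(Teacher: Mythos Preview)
Your proposal is correct and follows essentially the same approach as the paper: bound the past-burst residual in $\Delta_n$ by $\epsilon$ via the envelope $\phi(t)\le e^{-\rho t}$ and a geometric sum over the $D$-separated bursts, use this to raise the threshold to $Q_1=Q+\epsilon$, and then treat the detected and undetected cases exactly as in Theorem~\ref{thm1} with $v_{k,i}$ in place of $v_k$. The only organizational difference is that in Case~1 the paper expands $e^{3\rho\beta}F_{n+2}-F_{n-1}$ directly from the formula for $F_n$ (which immediately gives a \emph{single} past-burst residual bounded by $\epsilon$, matching the lone $\epsilon$ in \eqref{quest}), whereas you route through the telescoping identity $e^{3\rho\beta}F_{n+2}-F_{n-1}=e^{2\rho\beta}\Delta_{n+1}+e^{\rho\beta}\Delta_n+\Delta_{n-1}$; if you carry that out carefully the intermediate past-burst terms recombine to the same single residual, so the two routes coincide.
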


\begin{proof}

Suppose that we have detected the $(j-1)$-th burst in the time interval $[m\beta,(m+2)\beta)$ for some $m\in\N$. By Assumption \ref{as6}, the next nonzero burst $h_j$ must happen no sooner than $m\beta+D+4\beta,$ thus we just need to continue our detection from $(m+3+\lfloor \frac{D}{\beta}\rfloor)\beta.$ Now we simply denote $(m+3+\lfloor \frac{D}{\beta}\rfloor)$ by $n$ and analyze the  occurrence of a burst in the interval $[n\beta,(n+2)\beta)$. 
To do that, we first evaluate the quantities $F_n$ and $\Delta_n=e^{\rho\beta}F_{n+1}-F_n$ from the measurements \eqref{meas}:

\[
\begin{array}{ll}
F_n&=\mathfrak m_{n+1}(\frac{g}{\beta})-\mathfrak m_n(\frac{T^*(\beta)g}{\beta})\\
&=\int_{\tau_n}^{(n+1)\beta}\langle T((n+1)\beta-s)f_n \phi(s-\tau_n), \frac{g}{\beta}\rangle ds+\sum\limits_{\tau_i<n\beta}\int_0^{\beta}\langle T(\beta-s)f_i \phi(n\beta+s-\tau_i), \frac{g}{\beta}\rangle ds\\

&\quad+\int_0^{\beta}\langle T(\beta-s)\eta(n\beta+s), \frac{g}{\beta}\rangle ds+\nu((n+1)\beta, \frac{g}{\beta})-\nu(n\beta, \frac{T^*(\beta)g}{\beta}),\\
\end{array}
\]
and 
\[
\begin{array}{ll}
\Delta_n&=e^{\rho\beta}F_{n+1}-F_n\\
&=e^{\rho\beta}\int_{\tau_{n+1}}^{(n+2)\beta}\langle T((n+2)\beta-s)f_{n+1}\phi(s-\tau_{n+1}), \frac{g}{\beta}\rangle ds\\

&\quad+\int_0^{\beta}\langle T(\beta-s)f_ne^{\rho\beta}\phi((n+1)\beta+s-\tau_{n}), \frac{g}{\beta}\rangle ds-\int_{\tau_n}^{(n+1)\beta}\langle T((n+1)\beta-s)f_n\phi(s-\tau_n), \frac{g}{\beta}\rangle ds\\

&\quad+\sum\limits_{\tau_i<n\beta}\int_0^{\beta}\langle T(\beta-s)f_i(e^{\rho\beta}\phi((n+1)\beta+s-\tau_i)-\phi(n\beta+s-\tau_i)), \frac{g}{\beta}\rangle ds\\

&\quad+e^{\rho\beta}\int_0^{\beta}\langle T(\beta-s)\eta((n+1)\beta+s), \frac{g}{\beta}\rangle ds-\int_0^{\beta}\langle T(\beta-s)\eta(n\beta+s), \frac{g}{\beta}\rangle ds+\alpha_n\\
\end{array}
\]
where $\alpha_n=e^{\rho\beta}\nu((n+2)\beta, \frac{g}{\beta})-e^{\rho\beta}\nu((n+1)\beta, \frac{T^*(\beta)g}{\beta})-\nu((n+1)\beta, \frac{g}{\beta})+\nu(n\beta, \frac{T^*(\beta)g}{\beta}).$\\

From the expression above, we note that since we don't have a concrete formula for $\phi(t)$, we are unable to use the technique in Subsection \ref{model1} to cancel the effect of the bursts that occurred prior to $n\beta$. However, by Assumption \ref{as6}, the requirement that the distance $|t_{j+1}-t_j|$ between two bursts  is large enough, ensures that if no burst occurred in $[n\beta, (n+2)\beta)$ (i.e. $f_n=f_{n+1}=0$), then $|e^{\rho\beta}F_{n+1}-F_n|$ is below our chosen threshold \eqref{thresh1}. We will show that via the calculations below, where we use \eqref{Thresholderr}, \eqref{condition1} and Assumption \ref{as6}. 

\begin{equation}
\begin{array}{ll}\label{threshold2}
\lvert\Delta_n\rvert&=|e^{\rho\beta}F_{n+1}-F_n|\\
&=|\sum\limits_{\tau_i<n\beta}\int_0^{\beta}\langle T(\beta-s)f_i(e^{\rho\beta}\phi((n+1)\beta+s-\tau_i)-\phi(n\beta+s-\tau_i)), \frac{g}{\beta}\rangle ds\\

&\quad+\int_0^{\beta}\langle T(\beta-s)(e^{\rho\beta}\eta((n+1)\beta+s)-\eta(n\beta+s)), \frac{g}{\beta}\rangle ds+\alpha_n|\\
&\le \sum\limits_{\tau_i<n\beta}|\int_0^{\beta}\langle T(\beta-s)f_i(e^{\rho\beta}\phi((n+1)\beta+s-\tau_i)-\phi(n\beta+s-\tau_i)), \frac{g}{\beta}\rangle ds|\\

&\quad+|\int_0^{\beta}\langle T(\beta-s)(e^{\rho\beta}\eta((n+1)\beta+s)-\eta(n\beta+s)), \frac{g}{\beta}\rangle ds|+|\alpha_n|\\

&\le\sum\limits_{\tau_i<n\beta}\int_0^{\beta}|e^{\rho\beta}\phi((n+1)\beta+s-\tau_i)-\phi(n\beta+s-\tau_i)|\|f_i\|\|T^*(\beta-s)\frac{g}{\beta}\|ds\\
&\quad+e^{\rho\beta}CL\|g\|{\beta}+(e^{\rho\beta}-1)CK\|g\|+4e^{\rho\beta}\sigma\\

&\le\sum\limits_{\tau_i<n\beta}2e^{-\rho(n\beta-\tau_i)}C\|f_i\|\|g\|+e^{\rho\beta}CL\|g\|{\beta}+(e^{\rho\beta}-1)CK\|g\|+4e^{\rho\beta}\sigma\\

&=\sum\limits_{k=1}^{j-1}2e^{-\rho(n\beta-t_k)}C\|h_k\|\|g\|+e^{\rho\beta}CL\|g\|{\beta}+(e^{\rho\beta}-1)CK\|g\|+4e^{\rho\beta}\sigma\\

&\le \sum\limits_{k=1}^\infty 2e^{-k\rho D}CHR+e^{\rho\beta}CL\|g\|{\beta}+
(e^{\rho\beta}-1)CK\|g\|+4e^{\rho\beta}\sigma\\

&\le \frac{2}{e^{\rho D}-1}CHR+e^{\rho\beta}CL\|g\|{\beta}+(e^{\rho\beta}-1)CK\|g\|+4e^{\rho\beta}\sigma\\
&\le \epsilon+e^{\rho\beta}CL\|g\|{\beta}+(e^{\rho\beta}-1)CK\|g\|+4e^{\rho\beta}\sigma=Q_1(g,\beta).
\end{array}
\end{equation}
 Recall that in the above calculation $C=Me^{a\beta}$, 
 $\epsilon = \frac{2}{e^{\rho D}-1}CHR$ as defined by \eqref{ep}, $H$ is the upper bound constant in Assumption \ref{as3}, $L, K$ are the Lipschitz constant and the background source upper bound, respectively, in Assumption \ref{as4}, and $R = \sup_{g\in\widetilde G} \|g\|$ as in Assumption \ref{as2}.

\begin{remark}
In this case, the time difference $D$ between every pair of adjacent non-zero bursts will influence the error estimate. When $\epsilon<\sigma$, 
the past bursts only have a very weak impact on the subsequent bursts and their influence together is even smaller than the noise level $\sigma.$ 
\end{remark}

We now assume that the $j$-th burst occurred in the interval $[(n+1)\beta, (n+2)\beta)$. Similarly to our discussion in Subsection \ref{model1}, we consider two cases:  1) the burst is detected; and 2) the burst is not detected. As before, for Case 1, the burst is detected if and only if $\lvert \Delta_n\rvert>Q_1(g,\beta)$ or $\lvert \Delta_{n+1}\rvert>Q_1(g,\beta)$. We need to establish \eqref{quest} for each of the cases (assuming $\mathfrak f_j(g)=0$ in Case 2).


\medskip\noindent 
{\it Case 1.} The $j$-th burst  is detected in $[(n+1)\beta, (n+2)\beta)$.
\medskip

Assume that $\lvert \Delta_{n}\rvert>Q_1(g,\beta)$. Then $\tau_{n+1}=t_j$, $f_{n+1}=h_j$, $f_{n-1}=f_{n}=f_{n+2}=0$ and Algorithm 2 returns $\estime_j=(n+1)\beta$ and  $\mathfrak f_j(g)=e^{3\rho\beta}F_{n+2}-F_{n-1}$. 
We get 

\[
\begin{array}{ll}
&\quad e^{3\rho\beta}F_{n+2}-F_{n-1}\\

&=\int_0^{\beta}\langle T(\beta-s)h_j e^{3\rho\beta}\phi((n+2)\beta+s-t_{j}), \frac{g}{\beta}\rangle ds\\

&\quad+\sum\limits_{\tau_i<(n-1)\beta}\int_0^{\beta}\langle T(\beta-s)f_i(e^{3\rho\beta}\phi((n+2)\beta+s-\tau_i)-\phi((n-1)\beta+s-\tau_i)), \frac{g}{\beta}\rangle ds\\

&\quad+e^{3\rho\beta}\int_0^{\beta}\langle T(\beta-s)\eta((n+2)\beta+s), \frac{g}{\beta}\rangle ds-\int_0^{\beta}\langle T(\beta-s)\eta((n-1)\beta+s), \frac{g}{\beta}\rangle ds+\alpha_{n-1}^{'}\\
\end{array}
\]
where $\alpha_{n-1}^{'}=e^{3\rho\beta}\nu((n+3)\beta, \frac{g}{\beta})-e^{3\rho\beta}\nu((n+2)\beta, \frac{T^*(\beta)g}{\beta})-\nu(n\beta, \frac{g}{\beta})+\nu((n-1)\beta, \frac{T^*(\beta)g}{\beta}).$

Computing the error gives
\begin{equation}\label{error22}
\begin{array}{ll}
&\quad|\mathfrak f_j(g)-\langle h_j, g\rangle|\\
&=|e^{3\rho\beta}F_{n+2}-F_{n-1}-\langle h_{j}, g\rangle|\\
&\le|\int_0^{\beta}\langle T(\beta-s)h_je^{3\rho\beta}\phi((n+2)\beta+s-t_j), \frac{g}{\beta}\rangle ds-\langle h_j, g\rangle|\\

&\quad+|\sum\limits_{\tau_i<(n-1)\beta}\int_0^{\beta}\langle T(\beta-s)f_i(e^{3\rho\beta}\phi((n+2)\beta+s-\tau_i)-\phi((n-1)\beta+s-\tau_i)), \frac{g}{\beta}\rangle ds|\\

&\quad+|\int_0^{\beta}\langle T(\beta-s)e^{3\rho\beta}\eta((n+2)\beta+s), \frac{g}{\beta}\rangle ds-\int_0^{\beta}\langle T(\beta-s)\eta((n-1)\beta+s), \frac{g}{\beta}\rangle ds|+|\alpha_{n-1}^{'}|\\

&\le v_{3,2}(h_j, g, \beta)+\epsilon+3e^{3\rho\beta}CL\|g\|{\beta}+(e^{3\rho\beta}-1)CK\|g\|+4e^{3\rho\beta}\sigma,
\end{array}
\end{equation}
where $v_{3,2}$ is given by \eqref{v_2} and we estimated the last two terms of the first inequality  similarly to \eqref {threshold2}. 

Now assume that $\lvert \Delta_{n}\rvert\le Q_1(g,\beta)$ and $\lvert \Delta_{n+1}\rvert>Q_1(g,\beta)$. Then Algorithm 2 returns $\estime_j=(n+2)\beta$ and $\mathfrak f_j(g)=e^{3\rho\beta}F_{n+3}-F_{n}$. We then have 
 \begin{equation*}
\begin{array}{ll}
&\quad|\mathfrak f_j(g)-\langle h_j, g\rangle|\\
&\le|\int_0^{\beta}\langle T(\beta-s)h_je^{3\rho\beta}\phi((n+3)\beta+s-t_j), \frac{g}{\beta}\rangle ds-\langle h_j, g\rangle|\\
&\quad+\epsilon+3e^{3\rho\beta}CL\|g\|\beta+(e^{3\rho\beta}-1)CK\|g\|+4e^{3\rho\beta}\sigma,\\
\end{array}
\end{equation*}
 where $|\int_0^{\beta}\langle T(\beta-s)h_je^{3\rho\beta}\phi((n+3)\beta+s-t_j), \frac{g}{\beta}\rangle ds-\langle h_j, g\rangle|=v_{3,3}(h_j,g,\beta)$ is given by \eqref{v_2}.

\medskip\noindent 
{\it Case 2.} The $j$-th burst  is in $[(n+1)\beta, (n+2)\beta)$, but $\langle h_j, g\rangle$ it is not detected. \\
\medskip

If the $j$-th burst occurred in $[(n+1)\beta, (n+2)\beta)$ but was not detected by Algorithm 2, we use the fact that $e^{2\rho\beta}F_{n+2}-F_n$ is small to show that $\langle h_j, g\rangle\approx 0$. 

\[
\begin{array}{ll}
&\quad e^{2\rho\beta}F_{n+2}-F_{n}\\

&=\int_0^{\beta}\langle T(\beta-s)h_j e^{2\rho\beta}\phi((n+2)\beta+s-t_{j}), \frac{g}{\beta}\rangle ds\\

&\quad+\sum\limits_{\tau_i<n\beta}\int_0^{\beta}\langle T(\beta-s)f_i(e^{2\rho\beta}\phi((n+2)\beta+s-\tau_i)-\phi(n\beta+s-\tau_i)), \frac{g}{\beta}\rangle ds\\

&\quad+e^{2\rho\beta}\int_0^{\beta}\langle T(\beta-s)\eta((n+2)\beta+s), \frac{g}{\beta}\rangle ds-\int_0^{\beta}\langle T(\beta-s)\eta(n\beta+s), \frac{g}{\beta}\rangle ds+\alpha_n^{''}\\
\end{array}
\]
where $\alpha_n^{''}=e^{2\rho\beta}\nu((n+3)\beta, \frac{g}{\beta})-e^{2\rho\beta}\nu((n+2)\beta, \frac{T^*(\beta)g}{\beta})-\nu((n+1)\beta, \frac{g}{\beta})+\nu(n\beta, \frac{T^*(\beta)g}{\beta}).$
Now we estimate $|\langle h_j, g\rangle|$:
\begin{equation}\label{error21}
    \begin{array}{ll}
    &\quad|\langle h_j, g\rangle|\\
    
    &\le|-\int_0^{\beta}\langle T(\beta-s)h_je^{2\rho\beta}\phi((n+2)\beta+s-t_{j}), \frac{g}{\beta}\rangle ds|\\
    &\quad+|\int_0^{\beta}\langle T(\beta-s)h_je^{2\rho\beta}\phi((n+2)\beta+s-t_{j}), \frac{g}{\beta}\rangle ds-\langle h_j, g\rangle|\\
    &\le|-\int_0^{\beta}\langle T(\beta-s)h_je^{2\rho\beta}\phi((n+2)\beta+s-t_{j}), \frac{g}{\beta}\rangle ds+(e^{2\rho\beta}F_{n+2}-F_n)+(e^{2\rho\beta}F_{n+2}-F_n)|\\
    &\quad+v_{2,2}(h_j,g,\beta)\\

    &\le|-\int_0^{\beta}\langle T(\beta-s)h_je^{2\rho\beta}\phi((n+2)\beta+s-t_{j}), \frac{g}{\beta}\rangle ds+(e^{2\rho\beta}F_{n+2}-F_n)|\\
    &\quad+|e^{2\rho\beta}F_{n+2}-F_n|+v_{2,2}(h_j,g,\beta)\\

    &\le \epsilon+2e^{2\rho\beta}CL\|g\|{\beta}+(e^{2\rho\beta}-1)CK\|g\|+4e^{2\rho\beta}\sigma+2e^{\rho\beta}Q_1(g,\beta)+v_{2,2}(h_j,g,\beta).\\
    
    \end{array}
\end{equation}
where we have used $|e^{\rho\beta}F_{n+2}-F_{n+1}|\le Q_1(g,\beta)$, $|e^{\rho\beta}F_{n+1}-F_{n}|\le Q_1(g,\beta)$, and  estimated the term
$|-\int_0^{\beta}\langle T(\beta-s)h_je^{2\rho\beta}\phi((n+2)\beta+s-t_{j}), \frac{g}{\beta}\rangle ds+(e^{2\rho\beta}F_{n+2}-F_n)|$ as in  \eqref {threshold2}.

The theorem is proved.
\end{proof}

\section{Simulation}\label{sec4}

In order to evaluate the performance of our algorithms, we apply them to the following specific IVP:
\begin{equation*}
\begin{cases}
\dot{u}(t)=u(t)+\sum\limits_{i=1}h_i e^{-\rho(t-t_i)}\chi_{[t_, \infty)}(t)+\eta\\
u(0)=0\\
\end{cases}
\end{equation*}
with $h_1=3\sin(x)$, $h_2=2.5\cos(x)$, $h_3=x+2$, $x\in[0,1]$ and $t_1=0.25$, $t_2=0.54$, $t_3=0.78$, $t\in[0,1]$, and one of the two different types of background source:
$\eta=xe^{-Lt}$ or $\eta=x\sin(Lt)$.

Let $g_1=1$, $g_2=x$, $g_3=x^2$ be the sensor functions and compute the ground truth $\langle h_i, g_j\rangle$ for $i,j=1,2,3.$ In the simulation, we let $\rho=1$, $L=10^{-2}$ and the noise level $\sigma=10^{-3}$. The goal is to find the burst times $\{0.25, 0.76, 1.1\}$ and compare the output $\mathfrak f_i(g_j)$ with the ground truth $\langle h_i, g_j\rangle$ ($i,j=1,2,3$) for different time steps $\beta=0.015$ and $\beta=0.01$, respectively. We acquire the measurements \eqref{meas} and use the algorithm in Subsection \ref{model1}. The results are shown in Figures $\ref{fig1}$ and $\ref{fig2}$.

\begin{figure}[b]
    \centering
    \includegraphics[width=12cm]{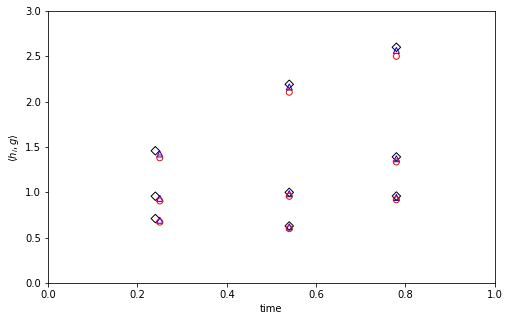}
    \caption{$\eta=xe^{-Lt}$}
\label{fig1}
\end{figure}
    
\begin{figure}[b]
    \centering    
    \includegraphics[width=12cm]{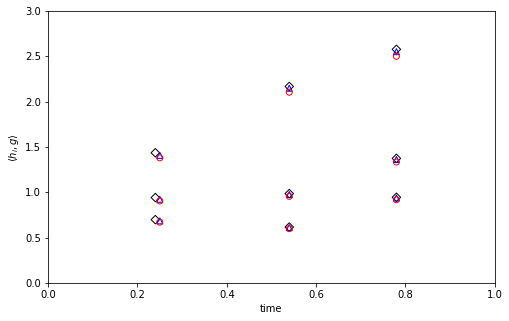}
    \caption{$\eta=xsin(Lt)$}
\label{fig2}
\end{figure}

To test the algorithms for the model in Section \ref{model2}, we use the same burst and sensor functions. We also test on the same background sources and let $L=10^{-2}.$ But here we let $\phi(t)=\frac{1}{2}(e^{-2t}+e^{-t})$, thus $0<\phi(t)\le e^{-t}.$ For other parameters, we let $t_1=1.1$, $t_2=9.8$, $t_3=19$, $D=8.6$ and $\sigma=10^{-3}$ ($\epsilon<\sigma$). The
goal is still to find out the bursts and compare the output with the ground truth for $\beta=0.015$ and $\beta=0.01$, respectively. We utilize the algorithm in Subsection \ref{model2} and the results are shown in Figures $\ref{fig3}$ and $\ref{fig4}$.

\begin{figure}[h]
    \centering
    \includegraphics[width=12cm]{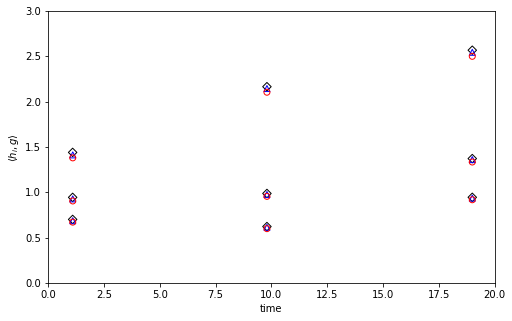}
    \caption{$\eta=xe^{-Lt}$}
\label{fig3}
\end{figure}
    
\begin{figure}[h]
    \centering    
    \includegraphics[width=12cm]{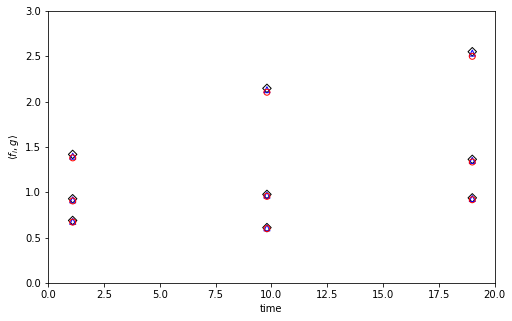}
    \caption{$\eta=xsin(Lt)$}
\label{fig4}
\end{figure}

\newpage
In Figures $\ref{fig1}$ through  $\ref{fig4}$, the results for $h_i$ lie in the $i$-th column. Red circles stand for the ground truth $\langle h_i, g_j\rangle$, black squares stand for the output $\mathfrak f_i(g_j)$ when $\beta=0.015$ and blue triangles stand for the output $\mathfrak f_i(g_j)$ when $\beta=0.01.$ As expected, the test shows that our algorithms can find out all bursts and the error gets smaller when the time step $\beta$ gets shorter. 

\newpage
\medskip 
\noindent {\bf{Acknowledgement}.} The authors of the paper were supported in part by the collaborative NSF grant DMS-2208030 and DMS-2208031.

\bibliographystyle{siam}
\bibliography{Akram_refs,refs}

\end{document}